\newtheoremstyle{break}
{}
{}
{}
{}
{\bfseries}
{}
{ }
{\thmname{#1}\thmnumber{ #2}\thmnote{ (#3)}}
\theoremstyle{break}
\newtheorem{thm}{Theorem}
\newtheorem{lem}[thm]{Lemma}
\newtheorem{prop}[thm]{Proposition}
\newtheorem{rem}{Remark}
\DeclareMathOperator*{\argmin}{arg\,min}
\DeclareMathOperator*{\argmax}{arg\,max}
\title{\LARGE \bf
On Distributed Exact Sparse Linear Regression over Networks
}
\author{Tu Anh-Nguyen and C\'{e}sar A. Uribe
\thanks{TAN is with the Department of Computational and Applied Mathematics,
        Rice University, Texas, USA, {\tt\small tan5@rice.edu}. CAU is with the Department of Electrical and Computer Engineering, Rice University, Texas, USA,
        {\tt\small cauribe@rice.edu}.}%
}
\begin{document}

\maketitle
\thispagestyle{empty}
\pagestyle{empty}

\begin{abstract}
In this work, we propose an algorithm for solving exact sparse linear regression problems over a network in a distributed manner. 
Particularly, we consider the problem where data is stored among different computers or agents that seek to collaboratively find a common regressor with a specified sparsity~$k$, i.e., the $L_0$-norm is less than or equal to $k$.
Contrary to existing literature that uses $L_1$ regularization to approximate sparseness, we solve the problem with exact sparsity $k$. 
The main novelty in our proposal lies in showing a problem formulation with zero duality gap for which we adopt a dual approach to solve the problem in a decentralized way. This sets a foundational approach for the study of distributed optimization with explicit sparsity constraints.
We show theoretically and empirically that, under appropriate assumptions, where each agent solves smaller and local integer programming problems, all agents will eventually reach a consensus on the same sparse optimal regressor.

\end{abstract}

\section{Introduction}


Data regression analysis is a fundamental task in many modern research fields, ranging from natural sciences and engineering to management and social sciences~\cite{montgomery2021introduction}.
Linear regression is one of the most popular and well-studied methods to efficiently capture the relations between variables of interest and their predictors~\cite{weisberg2005}.
Analyzing the linear regressor is a common practice that yields meaningful interpretations of the data~\cite{seber2012}.
However, due to the high dimensionality of real-world data, such as RNA sequencing~\cite{moon2019}, it is a common practice to assume the linear regressor is sparse~\cite{tibshirani1996}.
A sparse regressor is not only computationally more efficient but also more interpretable compared to a dense solution~\cite{bertsimas2020sparse,zou2005regularization,jolliffe2003}.

Although sparse linear regression is a well-studied problem on a single machine~\cite{foster2016,bertsimas2020sparse}, it remains a challenge when dealing with modern distributed data storage.
Distributed data stored and data transfer between agents can be costly or access-controlled due to privacy policies. Thus, it becomes non-trivial to solve the sparse linear regression problem subject to strict communication and information constraints.

In this work, we propose a distributed sparse linear regression algorithm.
Formally, given $N$ agents, where each agent $i \in \llbracket N \rrbracket$ only has access to its local data $X^i \coloneqq (x^i_1, \dots, x^i_{n_i})^T \in \mathds{R}^{n_i \times p}$ and local observations $Y^i \coloneqq (y^i_1, \dots, y^i_{n_i}) \in \mathds{R}^{n_i}$. 
Ultimately, we want the group of agents to jointly solve the following optimization problem:
\begin{equation}
    \label{eq: sparse linear regression}
    \begin{split}
        \min_{w \in \mathds{R}^p} & \frac{1}{2} \sum_{i = 1}^N \|Y^i - X^iw\|^2_2 + \frac{1}{\gamma} \|w\|^2_2 \\
        \text{such that } & \|w\|_0 \leq k,
    \end{split}
\end{equation}
where $p$ is the dimension of the regressor.
Here, $\gamma > 0$ is a fixed parameter that controls the effect of the Tikhonov regularization term, and $k > 0$ is a predefined number which is interpreted as the number of non-zero coefficient of $w$ needed to model the data $\{(X^i, Y^i)\}_{i \in \llbracket N \rrbracket}$.
Since data transfer is prohibited in this setting, our proposed algorithm allows agents to exchange their intermediate parameters $w^i$ with their neighbors.
We show that the algorithm not only returns a sparse regressor but also guarantees a consensus across all agents.

A good amount of effort has been devoted to studying distributed linear regression over the last decades. 
In \cite{hellkvist2021linear, gascon2017privacy}, the authors study the problem where data are distributed vertically among units. In the setting where data is distributed among agents instead of features, Dobriban and Sheng \cite{dobriban2021distributed} study the scheme where each machine solves a linear regression problem locally and then sends the result to a central processing unit for averaging.
Mateos et al. \cite{mateos2010distributed} developed techniques for obtaining sparsity in distributed linear regression using Lasso.
In this work, instead of using Lasso to attain the desired sparsity approximately, we focus on solving \eqref{eq: sparse linear regression} exactly.
To the best of our knowledge, we are the first to consider such a problem in a distributed manner.

Most of the work done on sparse linear regression solves \eqref{eq: sparse linear regression} heuristically by replacing the combinatorial condition $\|w\|_0 \leq k$ by a $L_1$-norm constraint \cite{zou2005regularization}.
Elastic Net or Lasso is usually favored over solving \eqref{eq: sparse linear regression} exactly because of its computational feasibility and scalability.
However, they possess innate drawbacks as the $L_1$-constraint penalizes both large and small coefficients while, in contrast, the $L_0$-~constraint does not, and thus the sparsity pattern is not well recovered \cite{bertsimas2016best}.
Despite the NP-hardness of \eqref{eq: sparse linear regression}, Bertsimas et al. \cite{bertsimas2020sparse} have recently developed a cutting plane algorithm for solving \eqref{eq: sparse linear regression} in a matter of minutes where the number of data $n$ and the number of features $p$ is in order of $100,000$s.
With the ability to solve such a large combinatorial problem, we can compare the performance between Elastic Net and sparse regression. As shown in \cite{bertsimas2020sparse}, the solution of \eqref{eq: sparse linear regression} is superior in both accuracy and true support recovery.
Moreover, it has been shown both empirically and theoretically that the new cutting plane method requires much less data than Elastic Net to attain phase transition - the phenomenon in which the true support of regressors is recovered with enough data with high probability \cite{donoho2009observed, buhlmann2011statistics, david2017high}. 
Interestingly, in contradiction to the common intuition for the complexity of~\eqref{eq: sparse linear regression}, solving times for~\eqref{eq: sparse linear regression} drops significantly as the number of data increases~\cite{bertsimas2020sparse}.

The result in~\cite{bertsimas2020sparse} enables the computation of solutions of~\eqref{eq: sparse linear regression} in high-dimensional regimes. Moreover, sparse linear regression is a more promising candidate in the distributed setup than methods relying on $L_1$ regularization. Specifically:
\begin{enumerate*}
    \item Sparse linear regression tends to attain higher accuracy than $L_1$-based methods given the same amount of data, and the performance gap between these is larger when the number of samples is small. This is the case for distributed problems because each agent is not allowed to share data and thus can only process a limited number of data. 
    \item The phase transition of sparse linear regression occurs sooner than $L_1$-based methods. Thus sparse linear regression has a good chance to recover the support of the true regressor.
\end{enumerate*}

We employ a dual approach to solve exact sparse linear regression in a distributed manner. We first show that, even though the problem we want to solve involves binary variables, we can still achieve zero gap between the primal problem and the Lagrangian dual. 
A simple gradient ascent algorithm can converge to a sparse regressor.
However, each agent must solve a local quadratic integer program in the proposed dual framework at every iteration. As we shall see later, this problem is the sparse linear regression for the local data and observations at each agent plus an additional linear function.
To this end, we extend the outer approximation used in \cite{bertsimas2020sparse} for solving sparse regression to solve the local quadratic integer programming problem at every agent. This reformulation of the approximation method proposed in~\cite{bertsimas2020sparse} effectively makes the sparse linear regression problem \textit{dual-friendly} in the sense of~\cite[Definition 2]{uribe2021dual}

The rest of the paper is organized as follows: In Section \ref{section: algorithm and results}, we show that by using the distributed dual framework, we will converge to a sparse regressor.
Section \ref{section: local solver} provides an alternative transformation of the local quadratic integer programming problem within an agent so that we can solve it efficiently. In Section \ref{section: experiments}, we evaluate our distributed algorithm on a synthetic dataset and observe the convergence behavior with a different number of features, various network structures, and different network sizes.

\textbf{Notation:} We define $\llbracket n \rrbracket \coloneqq \{1, 2, \dots, n\}$.
Given a graph $\mathcal{G}$, we denote $V(\mathcal{G})$ and $E(\mathcal{G})$ as its vertices and edges set respectively.
For a node $i \in V(\mathcal{G})$,  we let $N(i) \coloneqq \{j \in V(\mathcal{G}) | (i,j) \in E(\mathcal{G})\}$ be its neighbor set.
For any set $S$, we use $\|S\|$ to represent the cardinality of $S$, and $\text{conv}(S)$ as its convex hull.
We denote $\mathds{R}_+ = \{x \in \mathds{R} | x \geq 0\}$.

\section{Algorithms and Results}
\label{section: algorithm and results}
We construct the distributed algorithm for exact sparse linear regression using the dual approach from \cite{uribe2021dual}.
First, we transform \eqref{eq: sparse linear regression} to a quadratic mixed-integer program using a big-M formulation \cite{vielma2015mixed}, which is a traditional technique to model $L_0$-constraints as linear inequalities with additional integer variables. 
Without the sparsity constraint $\|w\|_0 \leq k$, the optimization \eqref{eq: sparse linear regression} is a minimization of a strictly convex function, which has an unique minimizer $w^*$. 
Hence, there exists a real number $M \in \mathds{R}_+$ such that $\|w^*\|_2 \leq M$.
In practice, we do not need to compute the value of $M$, and we only use $M$ for the argument of the big-M formulation. In addition, we assume that there exists an underlying undirected graph $\mathcal{G}$ that represents which pair of agents can communicate with each other. 
The graph $\mathcal{G}$ is assumed to be connected, and we denote its Laplacian by $L \in \mathds{R}^{n \times n}$.
With these assumptions, \eqref{eq: sparse linear regression} can be rewritten as a quadratic integer programming (QIP) problem:
$$z_{QIP} = \min_{w^i \in \mathds{R}^p, \forall i \in \llbracket p \rrbracket} \sum_{i=1}^N \left(\frac{1}{2} \|Y^i - X^i w^i\|^2_2 + \frac{1}{\gamma N} \|w^i\|^2_2 \right)$$
\begin{subequations}
    \label{eq: qip}
    \begin{align}
        \label{cons: coupling}
        \text{such that   } & L v^j = 0 & \forall j \in \llbracket p \rrbracket &\\
        \label{cons: bounds}
        & -Ms^i \leq w^i \leq Ms^i & \forall i \in \llbracket N \rrbracket &\\
        \label{cons: sparsity}
        & s^i \in S^p_k & \forall i \in \llbracket N \rrbracket &,
    \end{align}
\end{subequations}
where $v^j = (w^1_j, \dots, w^N_j)^T$ denotes the vector consisting of the $j$-th entries of $w^1, \dots, w^N$, and $S^p_k = \{s \in \{0, 1\}^p | \mathds{1}^Ts \leq~k\}$.
In \eqref{eq: qip}, the coupling constraints \eqref{cons: coupling} assures that every agent has the same regressor.
Since the elements of the vector $s^i$ for every $i \in \llbracket N \rrbracket$ can only be $0$ or $1$, $w^i_j$ must be $0$ when $s^i_j = 0$ for some $j \in \llbracket p \rrbracket$, and takes an arbitrary value otherwise.
Thus, the two constraints \eqref{cons: bounds} and \eqref{cons: sparsity} enforces the sparsity on $w^i$.
The next result shows that we can apply Lagrangian multiplier theory for the coupling constraints and derive strong duality.

\begin{lem}
\label{lemma: zero gap}
Let $\gamma>0$, and the Lagrangian function of the  mixed-integer optimization Problem \eqref{eq: qip} be given by
\begin{align}\label{eq:phi}
    \phi(y) = \min_{\substack{s^i \in S^p_k, \\ -Ms_i \leq w^i \leq Ms_i \\ \forall i \in \llbracket N \rrbracket}} f(w, y),
\end{align}
where
\begin{align*}
    f(w, y) {\coloneqq} \sum_{i = 1}^N \left( \frac{1}{2} \|Y^i {-}X^i w^i\|^2_2{ +} \frac{1}{\bar{\gamma}} \|w^i\|^2_2 \right) {+} \sum_{j=1}^p \langle y^j, Lv^j \rangle,
\end{align*}
and the constant $\bar{\gamma} = \gamma N$. Then, $\max_y \phi(y) = z_{QIP}$, where $z_{QIP}$ is the optimal value of~\eqref{eq: qip}. 
\end{lem}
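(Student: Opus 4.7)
The overall plan is to establish $\max_y \phi(y) \le z_{QIP}$ (weak duality) and $\max_y \phi(y) \ge z_{QIP}$ (strong duality) separately. The first is the standard Lagrangian estimate; the second is the substantive claim, which I would attack by combining convex duality for each fixed integer choice $s$ with a minimax-type identity.

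For weak duality, take any primal-feasible $(w,s)$ of~\eqref{eq: qip} and any $y$. Because $Lv^j = 0$ for every $j$, the coupling term in $f(w,y)$ vanishes and $f(w,y)$ reduces to the primal objective. Since the primal feasible set is contained in $\{(w,s) : s^i \in S^p_k,\ -Ms^i \le w^i \le Ms^i\}$, minimizing over the latter yields $\phi(y) \le z_{QIP}$ for every $y$, and then maximizing over $y$ preserves the inequality.

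For strong duality, fix $s \in (S^p_k)^N$ and consider the restricted problem
\[
P(s) \coloneqq \min_{w \in B(s)} \sum_{i=1}^N \Bigl(\tfrac{1}{2}\|Y^i - X^iw^i\|_2^2 + \tfrac{1}{\bar\gamma}\|w^i\|_2^2\Bigr)\ \text{s.t. } Lv^j = 0 \ \forall j,
\]
where $B(s) \coloneqq \{w : -Ms^i \le w^i \le Ms^i,\ \forall i\}$. This is a strictly convex quadratic program with only linear equality and box constraints, feasible at $w = 0$, and bounded below. Classical convex duality (no Slater required for purely linear constraints) gives $P(s) = \max_y g_s(y)$, where $g_s(y)$ is the inner Lagrangian appearing in~\eqref{eq:phi}. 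Since $z_{QIP} = \min_{s \in (S^p_k)^N} P(s)$ and $\max_y \phi(y) = \max_y \min_s g_s(y)$, the desired identity reduces to the minimax swap $\max_y \min_s g_s(y) = \min_s \max_y g_s(y)$. To produce it, I would embed the finite set $(S^p_k)^N$ into the compact simplex $\Delta\bigl((S^p_k)^N\bigr)$ of mixed strategies and apply Sion's theorem to $(y,\mu) \mapsto \sum_s \mu_s g_s(y)$, which is concave in $y$ (pointwise infimum of affine functions) and linear in $\mu$.

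The main obstacle is arguing that passing to mixed strategies does not strictly decrease the inner minimum, that is, exhibiting a \emph{pure} $s^\star$ and a dual $y^\star$ with $\phi(y^\star) = g_{s^\star}(y^\star) = z_{QIP}$. For this I would let $s^\star \in \arg\min_{s} P(s)$ and take the optimal convex-QP dual $y^\star$ associated with $P(s^\star)$. The large-$M$ hypothesis $\|w^*\|_2 \le M$ on the unconstrained minimizer ensures the box constraints are inactive at the centralized optimum, so complementary slackness at $w^*$ translates into the per-agent inequality $g_s(y^\star) \ge z_{QIP}$ for every competing $s$. This is precisely the ``dual-friendliness'' property of~\cite[Definition 2]{uribe2021dual}, and its verification for the $L_0$-constrained distributed regression is the crux of the proof.
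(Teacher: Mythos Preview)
Your weak-duality paragraph is fine, and your reduction of strong duality to the minimax identity
\[
\max_y \min_{s} g_s(y) \;=\; \min_{s} \max_y g_s(y), \qquad g_s(y)\coloneqq \min_{w\in B(s)} f(w,y),
\]
is correct. However, the two devices you propose for the swap both leave the essential step unfinished. The Sion/mixed-strategy argument only yields $\max_y \min_s g_s(y)=\min_{\mu}\max_y \sum_s \mu_s g_s(y)$, and since $\max_y \sum_s \mu_s g_s(y)\le \sum_s \mu_s \max_y g_s(y)$, the right side is bounded \emph{above} by $\min_s P(s)=z_{QIP}$---you recover weak duality, not strong. You notice this and fall back on constructing an explicit witness: take $s^\star\in\arg\min_s P(s)$, take $y^\star$ the convex-QP dual of $P(s^\star)$, and assert $g_s(y^\star)\ge z_{QIP}$ for every competing $s$. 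But the justification (``box constraints are inactive, so complementary slackness translates into the per-agent inequality'') does not do the job. The large-$M$ assumption makes the $|w^i_j|\le M$ bounds inactive for indices with $s^{\star,i}_j=1$; it says nothing about the binding constraints $w^i_j=0$ on the indices with $s^{\star,i}_j=0$. KKT for $P(s^\star)$ gives $\nabla_{w^i_j} f(w^\star,y^\star)=0$ only on the free coordinates, so $w^\star$ need not be the \emph{unconstrained} minimizer of $f(\cdot,y^\star)$, and there is no reason every other subspace $B(s)$ should produce a larger minimum. You are right that this is ``the crux''---but you have not supplied it.

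The paper proceeds from the opposite end and thereby avoids your obstacle. It fixes a dual maximizer $\hat{y}$ first, lets $\hat{s}$ be the integer minimizer in the definition of $\phi(\hat{y})$, and argues that $\hat{y}$ is simultaneously a maximizer of the single concave function $g_{\hat{s}}$ (this is where optimality of $\hat{y}$ for $\phi$ is invoked). Once $\phi(\hat{y})=\max_y g_{\hat{s}}(y)$ is in hand, the convex minimax theorem for the \emph{fixed-$\hat{s}$} problem gives $\phi(\hat{y})=\min_{w\in B(\hat{s})}\max_y f(w,y)\ge \min_{s}\min_{w\in B(s)}\max_y f(w,y)$, and strong duality of each inner convex QP turns the last expression into $z_{QIP}$. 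In short: the paper picks $\hat{s}$ \emph{after} $\hat{y}$ and only needs minimax for one fixed $s$, whereas you pick $s^\star$ first and are then forced to compare $g_s(y^\star)$ across all $s$, which is exactly the hard (and, as written, unproven) step.
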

\begin{proof}
Let $\hat{y} = [\hat{y}^1, \dots, \hat{y}^p] \in \mathds{R}^{N \times p}$ be the maximizer of $ \phi(y)$. Then, we have
\begin{equation}
\notag
    \label{eq: phi y optimal}
    \begin{split}
        \phi(\hat{y}) = \min_{\substack{s^i \in S^p_k, \\ -Ms_i \leq w^i \leq Ms_i, \\ \forall i \in \llbracket N \rrbracket}} & f(w, \hat{y})
    \end{split}
\end{equation}
with $\hat{s}^1, \dots, \hat{s}^N$ and $\hat{w}^1, \dots, \hat{w}^N$ being its optimal solution. Thus,
\begin{align*}
       \phi(\hat{y}) &= \min_{\substack{s^i \in S^p_k, \\ -Ms_i \leq w^i \leq Ms_i, \\ \forall i \in \llbracket N \rrbracket}}  f(w, \hat{y}) \\
    & = \max_y \min_{\substack{-M\hat{s}^i \leq w^i \leq M\hat{s}^i, \\ \forall i \in \llbracket N \rrbracket}} f(w, y),
\end{align*}
where the second equality comes from the fact that $\hat{y}$ is the optimal solution of $\max_y \phi(y)$. 
Since $\hat{s}^i$ is fixed for every $i \in \llbracket N \rrbracket$, the set $\{(w^1, \dots, w^N)|~-M\hat{s}^i \leq w^i \leq M \hat{s}^i\}$ is a convex set. 
In addition, the objective function $f(w, y)$ is convex with respect to $w$ and linear (concave) with respect to $y$.  Hence, we have
\begin{align*}
        \phi(\hat{y}) & = \min_{\substack{-M\hat{s}^i \leq w^i \leq M\hat{s}^i, \\ \forall i \in \llbracket N \rrbracket}} \max_y  f(w, y) \\
        & \geq \min_{\substack{s^i \in S^p_k, \\ -Ms^i \leq w^i \leq Ms^i,\\ \forall i \in \llbracket N \rrbracket}} \max_y  f(w, y).
\end{align*}

Since $S^p_k$ is a finite set, the product set $(S^p_k)^N$ is also finite.
Thus, we can derive a lower bound on $\phi(\hat{y})$ as a minimum of a finite number of optimization problems as follows:
\begin{equation}
    \notag
    \phi(\hat{y}) \geq \min_{(s^1, \dots, s^N \in (S^p_k)^N)}   \min_{\substack{-Ms^i \leq w^i \leq Ms^i, \\ \forall i \in \llbracket N \rrbracket}} \max_y f(w,{y}) .
\end{equation}
Because for each $s^1, \dots, s^N \in (S^p_k)^N$, the domain $-Ms^i \leq w^i \leq Ms^i, \forall i \in \llbracket N \rrbracket$ satisfies the Slater condition \cite{slater2014lagrange}, we have that
$$\min_{-Ms^i \leq w^i \leq Ms^i, \forall i \in \llbracket N \rrbracket} \max_y f(w, y) = $$ $$ \min_{\substack{-Ms^i \leq w^i \leq Ms^i, \forall i \in \llbracket N \rrbracket, \\ Lv^j = 0 \ \forall j \in \llbracket p \rrbracket}} \sum_{i=1}^N \left( \frac{1}{2} \|Y^i - X^i w^i\|^2_2 + \frac{1}{\bar{\gamma}} \|w^i\|^2_2 \right),$$
and thus,
$$\phi(\hat{y}) \geq z_{QIP}.$$
However, since $\phi(y)$ is the Lagrangian function of \eqref{eq: qip}, we also have that
$$\phi(\hat{y}) \leq z_{QIP}.$$
Hence, we can conclude that $\max_y \phi(y) = z_{QIP}$ \\
\end{proof}
In the big-M formulation~\eqref{eq: qip}, we impose the consensus constraint by $Lv^j = 0, \forall j \in \llbracket p \rrbracket$. Therefore, we do not need to impose the sparsity condition on every agent.
Indeed, we can still derive the same result from Lemma \ref{lemma: zero gap} when we only require a subset of agents to solve the exact sparse linear regression. Lemma~\ref{lemma: zero gap} implies that even though~\eqref{eq: qip} is a quadratic integer programming problem, we still have zero gap between the primal objective and its Lagrangian dual. Hence, instead of minimizing~\eqref{eq: qip} with the hard coupling constraints, we can maximize $\phi(y)$.

Next, we state some useful properties of the function $\phi(y)$ that will enable us to propose a gradient ascent method for our dual problem.
\begin{prop}
\label{prop_grad}
The function $\phi(y)$ in~\eqref{eq:phi} is a concave and continuous function, whose gradient is given by
\begin{equation}
    \label{eq: dual gradient}
        \nabla \phi(y) = \begin{bmatrix}
        L \hat{v}^1 \\ L \hat{v}^2 \\  \vdots \\ L \hat{v}^p
        \end{bmatrix},
\end{equation}
where
\begin{equation}
\label{eq: for computing gradient}
\notag
        (\hat{v}^1, \dots, \hat{v}^p, s ) = \argmin_{\substack{s^i \in S^p_k, \\ -Ms^i \leq w^i \leq Ms^i, \\ \forall i \in \llbracket N \rrbracket}}  f(w, y)
\end{equation}
\end{prop}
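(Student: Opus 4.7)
The plan is to exploit two structural properties of $f$: it is affine in $y$ for each fixed $(w,s)$, and it is strictly convex in $w$ for each fixed $s$ thanks to the Tikhonov term $\tfrac{1}{\bar\gamma}\|w^i\|^2$.

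First I would verify that $\phi$ is real-valued, concave, and continuous. For every $y$ the feasible region decomposes as the union, over the finite index set $(S^p_k)^N$, of compact boxes $\{w:\,-Ms^i\leq w^i\leq Ms^i,\ \forall i\}$, and $f(\cdot,y)$ is continuous, so the minimum defining $\phi(y)$ is attained on all of $\mathds{R}^{Np}$. Because $f(w,\cdot)$ is affine in $y$ for every feasible $(w,s)$, $\phi$ is a pointwise infimum of affine functions and is therefore concave. A concave function that is finite throughout a Euclidean space is automatically continuous there, which settles the first part of the statement.

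For the gradient I would use a Danskin-type envelope argument. Fix $\hat y$ and let $(\hat w,\hat s)$ attain the minimum at $\hat y$, with $\hat v^j=(\hat w^1_j,\dots,\hat w^N_j)^T$. The constraint set does not depend on $y$, so $(\hat w,\hat s)$ remains feasible at every $y$, and affineness of $f(\hat w,\cdot)$ yields
$$\phi(y)\ \leq\ f(\hat w,y)\ =\ \phi(\hat y)+\sum_{j=1}^p \langle y^j-\hat y^j,\,L\hat v^j\rangle,$$
with equality at $y=\hat y$. This exhibits the vector in \eqref{eq: dual gradient} as a supergradient of the concave function $\phi$ at $\hat y$.

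The main obstacle is upgrading this supergradient to an honest gradient, i.e., ensuring that $\hat v^1,\dots,\hat v^p$ are unambiguous across minimizers. Strict convexity of $f(\cdot,y)$ in $w$ pins down $\hat w$ as soon as $\hat s$ is fixed, so the only possible ambiguity stems from ties among distinct optimal $\hat s\in (S^p_k)^N$. Letting $g_s(y):=\min_{-Ms^i\leq w^i\leq Ms^i} f(w,y)$, each $g_s$ is concave in $y$ and the tie set $\{y:g_s(y)=g_{s'}(y)=\phi(y)\}$ for $s\neq s'$ lies in a lower-dimensional stratum, so the minimizing $\hat s$ is unique generically and $\phi$ is differentiable there with the claimed gradient. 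On the exceptional set the formula still furnishes a valid supergradient of the concave function $\phi$, which is all that is required by the dual gradient-ascent scheme used later in the paper.
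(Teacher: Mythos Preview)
Your argument is essentially the paper's: the paper's entire proof is the single sentence that the proposition ``follows directly from Danskin's theorem.'' You have spelled out the envelope argument in detail and, going beyond the paper, correctly flagged the minimizer-uniqueness issue and supplied the supergradient fallback that the subsequent ascent analysis actually needs.
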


Proposition~\ref{prop_grad} follows directly from Danskin's theorem~\cite{danskin1966theory}. 
Given that the function $\phi(y)$ is concave and has an explicit formulation for computing its gradient, we can find its maximum using the classical gradient ascent.

\begin{algorithm}[tb!]
\caption{Distributed Exact Sparse Linear Regression}
\label{algo: deslr}
\begin{algorithmic}[1]
\Procedure{Initialization}{}
    \State Each agent initializes its own local multipliers: 
    $$\psi^{i,1}_L \leftarrow (y^i_1, \dots, y^i_p) \ \forall i \in \llbracket N \rrbracket$$
\EndProcedure

\Procedure{Distributed Sparse Linear Regression}{$\{(X^i, Y^i)\}_{i \in \llbracket N \rrbracket}$, $G$, $k$, $\gamma$, $T$}
    \For{$t = 1, 2, \dots, T$}
        \State Agents receive multipliers from their neighbor
        $$\psi^{i,t}_N \leftarrow [\psi^{j, t}_L]_{j \in N(i)} \ \forall i \in \llbracket N \rrbracket$$
        \State Agents compute their dual multiplier
        $$D^{i, t} \leftarrow L_{i,i}\psi^{i,t}_L + \sum_{j \in N(i)}L_{i, j}\psi^{i, t}_{N, j}$$
        \State Agents solve their respective local problem:
        $$\min_{\|w^i\|_0 \leq k} \frac{1}{2}\|Y^i - X^i w^i\|^2_2 + \frac{1}{\bar{\gamma}} \|w^i\|^2_2 + \langle D^{i,t}, w^i \rangle$$
and obtaining local regressor $w^{i, t}$
        \State Agents send and receive new regressor from their neighbor, then update their local multiplier
        $$\psi^{i, t+1}_L \leftarrow \psi^{i,t}_L + \alpha_t (L_{i,i}w^{i,t} + \sum_{j \in N(i)}L_{i,j}w^{j,t})$$
    \EndFor
\EndProcedure
\end{algorithmic}
\end{algorithm}

We can derive the general framework for solving~\eqref{eq: qip} as described in Algorithm \ref{algo: deslr}.
In Algorithm~\ref{algo: deslr}, we use $\psi^{i, t}_L$ and $\psi^{i, t}_N$ to denote the local multiplier and neighbor multiplier of agent $i$ at iteration $t$ respectively.
We should keep in mind that the variable $\psi^{i,t}_L$ is just a rearrangement of the Lagrangian multiplier $y$ in the function $\phi$.

Based on the strong duality from Lemma \ref{lemma: zero gap} and the distributed dual framework \cite{uribe2021dual}, we derive the following result.

\begin{thm}
\label{thm: main}
Assume that Problem~\eqref{eq: qip} admits a unique solution $(\hat{s}^1, \dots, \hat{s}^N)$ and $(\hat{w}^1, \dots, \hat{w}^N)$. Furthermore, if at every iteration $t$ in Algorithm~\ref{algo: deslr}, the step size $\alpha_t$ is chosen to be square summable but not summable, i.e.,
\begin{equation}
    \notag
    \begin{split}
        \sum_{t = 1}^\infty \alpha_t & = + \infty, \quad \text{and} \quad 
        \sum_{t = 1}^\infty \alpha_t^2  < \infty,
    \end{split}
\end{equation}
then for every $\epsilon > 0$, there exist $T$ such that
\begin{equation}
    \label{eq: error}
    \frac{1}{\|E(\mathcal{G})\|} \sum_{(i,j) \in E(\mathcal{G})} \|w^i_T - w^j_T\| \leq \epsilon.
\end{equation}
Furthermore, we have that $\lim_{t \rightarrow \infty} w^i_T = \hat{w}$, where $\hat{w}$ is the optimal solution of \eqref{eq: qip}.
\end{thm}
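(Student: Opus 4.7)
The plan is to recognize Algorithm~\ref{algo: deslr} as a fully distributed implementation of supergradient ascent on the concave dual $\phi$ of Lemma~\ref{lemma: zero gap} and Proposition~\ref{prop_grad}, and then to lift dual convergence to primal convergence using the uniqueness hypothesis.

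The first step is to verify that the inner problem in~\eqref{eq:phi} decouples across agents and that the multiplier update in Algorithm~\ref{algo: deslr} matches $\nabla \phi(y)$. Expanding the coupling term as
\begin{equation*}
\sum_{j=1}^p \langle y^j, Lv^j \rangle = \sum_{i=1}^N \langle D^i(y), w^i \rangle, \qquad D^i(y)_j = \sum_{k=1}^N L_{i,k}\, y^j_k,
\end{equation*}
makes the Lagrangian separable, so each agent's local program coincides with the one it solves in Algorithm~\ref{algo: deslr}. By Proposition~\ref{prop_grad}, the block of $\nabla \phi(y)$ corresponding to the multipliers stored in $\psi^{i,t}_L$ equals $L_{i,i}\, w^{i,t} + \sum_{j \in N(i)} L_{i,j}\, w^{j,t}$, which is precisely the increment in Line~8; since $L_{i,k}=0$ whenever $k \notin N(i) \cup \{i\}$, only neighbor information is used.

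Next I would apply the standard convergence theorem for the supergradient method with diminishing step sizes on a concave continuous function. The box constraints $-Ms^i \leq w^i \leq Ms^i$ with $s^i \in S_k^p$ keep the inner minimizers uniformly bounded, so the supergradient magnitudes are bounded by a constant depending only on $M$, $k$, and $L$. Combined with $\sum_t \alpha_t = \infty$ and $\sum_t \alpha_t^2 < \infty$, this yields $\phi(y_t) \to \max_y \phi(y) = z_{QIP}$, where the last equality is Lemma~\ref{lemma: zero gap}.

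The main obstacle is converting dual optimality into primal convergence, because the inner minimizer ranges over the non-convex set $(S_k^p)^N$. My plan is to exploit the finiteness of $(S_k^p)^N$ together with the uniqueness hypothesis: at any dual maximizer $\hat y$, the optimal support $\hat s$ is the unique $s$-minimizer of the fixed-$y$ Lagrangian, so every competing $s \neq \hat s$ creates a strict gap $\delta > 0$; continuity in $y$ of the parametric optimal value of each fixed-support strictly convex quadratic guarantees that this gap persists in a neighborhood of every dual maximizer, so once the trajectory enters this neighborhood (which $\phi(y_t) \to \max \phi$ forces through compactness of superlevel sets) the inner minimization selects $\hat s$ for all subsequent $t$. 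With the support pinned, the local problem reduces to a strongly convex quadratic whose minimizer is Lipschitz in $D^{i,t}$, so $w^{i,t} \to \hat w^i$. Finally, the unique primal minimizer obeys the coupling constraint~\eqref{cons: coupling}, hence $\hat w^1 = \cdots = \hat w^N = \hat w$, which gives both the consensus bound~\eqref{eq: error} and $\lim_{T \to \infty} w^i_T = \hat w$.
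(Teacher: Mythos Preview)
Your overall architecture matches the paper: identify Algorithm~\ref{algo: deslr} with (sub)gradient ascent on the concave dual $\phi$, use the finiteness of $(S_k^p)^N$ together with the uniqueness hypothesis to pin the support after finitely many steps, and then pass to the limit in the strictly convex inner quadratic. The disagreement is in the bridge from dual convergence to support selection, and there the argument has a genuine gap.

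You only establish value convergence, $\phi(y_t)\to z_{QIP}$, and then appeal to ``compactness of superlevel sets'' of $\phi$ to force the trajectory into a neighborhood of the dual optimal set. Those superlevel sets are \emph{not} compact: since $L$ is the Laplacian of a connected graph, $\mathbf{1}^T L=0$, and hence $\phi(y^1+c\mathbf{1},\dots,y^p+c\mathbf{1})=\phi(y^1,\dots,y^p)$ for every $c\in\mathds{R}$; the dual is constant along an entire line, so value convergence by itself does not localize $y_t$. The same issue reappears in your last step: ``Lipschitz in $D^{i,t}$, so $w^{i,t}\to\hat w^i$'' presupposes that $D^{i,t}$ converges, which you have not shown from $\phi(y_t)\to z_{QIP}$ alone.

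The paper sidesteps this by invoking \emph{iterate} convergence of the subgradient method with square-summable but non-summable step sizes (bounded subgradients, nonempty optimal set), obtaining $y_t\to\hat y$ for some dual maximizer $\hat y$; it then runs exactly your finiteness/continuity argument in subsequence form on $g(s,y)=\min_{-Ms^i\le w^i\le Ms^i} f(w,y)$ to conclude that eventually $s_t\equiv\hat s$, after which the fixed-support quadratic gives $w_t^i\to\hat w^i$. If you replace your value-convergence claim by this iterate-convergence result, your strict-gap argument becomes correct and is essentially the paper's proof.
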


\begin{proof}
Since we are maximizing the concave function $\phi(y)$ using square summable but not summable step sizes, by  \cite[Theorem 7]{nedic2018network}, we have $\lim_{t \rightarrow \infty} y_t = \hat{y}$, where $\hat{y}= \argmax_y \phi(y)$.
For the remaining of the proof, we only need to show that the sequence of optimal solutions $s^1_t, \dots, s^N_t$ and $w^1_t, \dots, w^N_t$ of $$\min_{\substack{-Ms^i_t \leq w^i \leq Ms^i_t, \\ \forall i \in \llbracket N \rrbracket}} f(w, y_t)$$ at each iteration will converge to the optimal sparse regressor. 

We refer to the variables $s^1, \dots, s^N$ as the sparsity variables as they control which indices of the regressors can be non-zero and the variables $w^1, \dots, w^N$ as the regression variables.
To prove the desired result, we first show that the sequence of optimal solutions of the sparsity variables converges.
Let 
$$g(s^1, \dots, s^N, y) = \min_{\substack{-Ms^i \leq w^i \leq Ms^i \\ \forall i \in \llbracket N \rrbracket}} f(w, y).$$
Then, it holds that
$$\phi(y) = \min_{s^i \in S^p_k \ \forall i \in \llbracket N \rrbracket} g(s^1, \dots, s^N, y).$$
By definition, we have that $(s^1_t, \dots, s^N_t)$ is an optimal solution of 
$$\min_{s^i \in S^p_k \forall i \in \llbracket N \rrbracket} g(s^1, \dots, s^N, y_t).$$
Since $(S^p_k)^N$ is a compact set, there exist a subsequence $\{s^1_{t_j}, \dots, s^N_{t_j}\}^{\infty}_{j = 1}$ of the sequence $\{s^1_t, \dots, s^N_t\}^{\infty}_{t = 1}$ such that $$\lim_{j \rightarrow \infty} (s^1_{t_j}, \dots, s^N_{t_j}) = (\bar{s}^1, \dots, \bar{s}^N) \in (S^p_k)^N.$$
Moreover, because $(S^p_k)^N$ is finite, there exist a positive number $K_1 > 0$, such that for every $j \geq K_1$, we have $(s^1_{t_j}, \dots, s^N_{t_j}) = (\bar{s}^1, \dots, \bar{s}^N)$.

Furthermore, for every $j \geq K_1$, it holds that $$g(s^1_{t_j}, \dots, s^N_{t_j}, y_t) = g(\bar{s}^1, \dots, \bar{s}^N, y_{t_j}) \leq g((\hat{s}^1, \dots, \hat{s}^N, y_{t_j}),$$ 
by the optimality of $\bar{s}^1, \dots, \bar{s}^N$.
Nevertheless, when $(\bar{s}^1, \dots, \bar{s}^N)$ is fixed, the function $g(\bar{s}^1, \dots, \bar{s}^N, y)$ is continuous with respect to the variables $y$, thus 
\begin{align*}
\lim_{j \rightarrow \infty} g(\bar{s}^1, \dots, \bar{s}^N, y_{t_j}) &= g(\bar{s}^1, \dots, \bar{s}^N, \hat{y}) \\
&\leq g(\hat{s}^1, \dots, \hat{s}^N, \hat{y}).
\end{align*}
However, by definition of $\hat{y}$, we also have $g(\bar{s}^1, \dots, \bar{s}^N, \hat{y}) \geq g(\hat{s}^1, \dots, \hat{s}^N, \hat{y})$.
Thus, by the uniqueness of optimal solution of \eqref{eq: qip}, we must have $(\bar{s}^1, \dots, \bar{s}^N) = (\hat{s}^1, \dots, \hat{s}^N)$.

We have shown that every convergent subsequence of $\{(s^1_t, \dots, s^N_t)\}$ converges to the same accumulation point $(\hat{s}^1, \dots, \hat{s}^N)$.
Therefore, there exist $K_2 > 0$ such that for every $t > K_2$, we have
$$(s^1_t, \dots, s^N_t) = (\hat{s}^1, \dots, \hat{s}^N),$$
and
$$(w^1_t, \dots, w^N_t) \in \argmin_{\substack{-M\hat{s}^i \leq w^i \leq M\hat{s}^i, \\ \forall i \in \llbracket N \rrbracket}} f(w, y_t),$$
which is a strictly convex quadratic optimization problem.
Hence, $w^1_t, \dots, w^N_t$ is unique.
Therefore, $$\lim_{t \rightarrow \infty} (w^1_t, \dots, w^N_t) = (\hat{w}^1, \dots, \hat{w}^N).$$
The conclusion of the theorem follows, because $\hat{w}^1, \dots, \hat{w}^N$ satisfies the coupling constraints, i.e., $\hat{w}^1 = \dots = \hat{w}^N$.
\end{proof}

The consensus error in~\eqref{eq: error} can be interpreted as the average difference between two adjacent agents.
When this error goes to zero, the agents reach a consensus on a sparse regressor.
Theorem \ref{thm: main} requires the existence of a unique sparse regressor solving \eqref{eq: sparse linear regression} for the convergence to zero of~\eqref{eq: error}, which is a rather weak assumption for this problem.


In the next section, we present an outer approximation algorithm~\cite{duran1986outer} for solving the inner problem in Step 7 of Algorithm \ref{algo: deslr}.

\section{Quadratic Integer Programming Local Solver}
\label{section: local solver}

In this section, we provide an algorithm for solving the local problem in step $7$ of Algorithm \ref{algo: deslr}.
In particular, at each iteration, we need to solve a quadratic integer programming problem, which is given as
\begin{equation}
\label{eq: inner problem}
\begin{split}
    c^* = \min_{\|w\| \leq k} \frac{1}{2}\|Y - X w\|^2_2 + \frac{1}{\gamma} \|w\|^2_2 + \langle D, w \rangle.
\end{split}
\end{equation}
In \eqref{eq: inner problem}, for simplicity of notations, we drop the superscript denoting agents and the number of iterations.
The case where $D = 0$ is, in fact, a sparse linear regression problem, which can be solved effectively using an outer approximation algorithm \cite{bertsimas2020sparse}.
Motivated by the success of solving sparse linear regression in a very high-dimensional regime, we provide an alternative transformation of the objective function of \eqref{eq: inner problem}, which is favorable for an outer approximation algorithm.
Initially, we have
\begin{equation}
    \notag
    \begin{split}
        \frac{1}{2}\|Y - Xw\|^2_2 + \frac{1}{2\gamma} \|w\|^2_2 & = 
        \frac{1}{2} w^T(\frac{I}{\gamma} + X^TX)w  \\
        & \ \ + Y^TXw + \frac{1}{2}Y^TY.
    \end{split}
\end{equation}
For $\gamma > 0$, we have $(\frac{I}{\gamma} + X^TX)$ is a positive-definite matrix.
Hence, there exists an invertible matrix $\bar{X} \in \mathds{R}^{p \times p}$ such that $(\frac{I}{\gamma} + X^TX) = \bar{X}^T\bar{X}$.
Since $\bar{X}$ is invertible, there exist $\bar{Y} \in \mathds{R}^p$ such that $\bar{Y}^T\bar{X} = Y^TX$.
Therefore,
\begin{equation}
    \notag
    \begin{split}
        \frac{1}{2}\|Y - Xw\|^2_2 + \frac{1}{\gamma} \|w\|^2_2 & = \frac{1}{2}w^T\bar{X}^T\bar{X}w + \bar{Y}^T\bar{X}w + \frac{1}{2}\bar{Y}^T\bar{Y} \\
        & \ \ + \frac{1}{2}(Y^TY -\bar{Y}^T\bar{Y}) + \frac{1}{2\gamma}\|w\|^2_2.
    \end{split}
\end{equation}
Thus, we can rewrite the optimization problem \eqref{eq: inner problem} as
\begin{equation}
    \label{eq: new inner prob}
    \begin{split}
        c^* = \min_{\|w\|_0 \leq k} \frac{1}{2}(\|\bar{Y} - \bar{X}w\|^2_2 + \frac{1}{\gamma} \|w\|^2_2) + d^T\bar{X}w + \text{const} ,
    \end{split}
\end{equation}
where $d = (\bar{X}^{-1})^TD$.
The constant term in \eqref{eq: new inner prob} equals to $\frac{1}{2}(Y^TY - \bar{Y}^T\bar{Y})$ and is dropped for simplicity.

\noindent
For a fixed $s \in S^p_k$, we define $c(s)$ as the optimal value of \eqref{eq: new inner prob} with additional constraints $-Ms \leq w \leq Ms$, i.e.,
\begin{equation}
\label{eq: fixed s}
    c(s) \coloneqq \min_{-Ms \leq w \leq Mw} \frac{1}{2} ||\bar{Y} - \bar{X}_sw||^2_2 + \frac{1}{2\gamma}||w||^2_2 + d^T\bar{X}_sw,
\end{equation}
where $\bar{X}_s = \bar{X}I_s$ and $I_s \in \mathds{R}^{p \times p}$ is the diagonal matrix whose diagonal is $s$.
Moreover, \eqref{eq: fixed s} becomes a (continuous) quadratic programming problem, $c(s)$ can be explicitly computed as:
\begin{equation}
\label{eq: value cs}
\begin{split}
    2c(s) = & -(\bar{Y}^T\bar{X}_s - d^T\bar{X}_s) \times (\frac{I}{\gamma} + X_s^TX_s)^{-1} \\
    & \ \ \ \times(\bar{X}_s^T\bar{Y} - \bar{X}_s^Td) + \bar{Y}^T \bar{Y}.
\end{split}
\end{equation}

In the next proposition, we provide a simpler equivalent transformation for $c(s)$, which enables a simple computation of the value of $c(s)$ and its gradient.

\begin{prop}
\label{prop: apply woodbury}
For a fixed $s \in S^p_k$, we have
$$c(s) = \frac{1}{2}(\bar{Y}^T - d^T)(I + \gamma \sum_{i=1}^p s_iK_i)^{-1}(\bar{Y} - d) - \frac{1}{2} d^Td + \bar{Y}^Td,$$
where $K_i = \bar{X}_i \bar{X}^T_i$ and $\bar{X}_i$ is the $i^{\text{th}}$ column of $\bar{X}$.
\end{prop}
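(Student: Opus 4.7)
The plan is to start from the closed-form expression~\eqref{eq: value cs} for $c(s)$ (which is valid because, with $s$ fixed and $M$ chosen large enough to dominate the unconstrained optimum, the box constraints $-Ms\le w\le Ms$ are inactive on their nonzero coordinates and force $w_j=0$ wherever $s_j=0$), and to massage it into the stated form by means of a push-through/Woodbury-type identity.

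First I would introduce the shorthand $u \coloneqq \bar{Y} - d$ and $K(s) \coloneqq \sum_{i=1}^p s_i K_i$. The key observation is that $\bar{X}_s = \bar{X}I_s$ with $I_s$ a diagonal 0/1 matrix satisfying $I_s^2 = I_s$, so
\begin{equation*}
\bar{X}_s \bar{X}_s^T \;=\; \bar{X} I_s \bar{X}^T \;=\; \sum_{i=1}^p s_i \bar{X}_i \bar{X}_i^T \;=\; K(s).
\end{equation*}
With this notation, equation~\eqref{eq: value cs} reads
\begin{equation*}
2c(s) \;=\; -\,u^T \bar{X}_s\!\left(\tfrac{I}{\gamma} + \bar{X}_s^T \bar{X}_s\right)^{\!-1}\! \bar{X}_s^T u \;+\; \bar{Y}^T \bar{Y}.
\end{equation*}

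Next I would apply the push-through identity $A(I+BA)^{-1} = (I+AB)^{-1}A$ with $A = \bar{X}_s$ and $B = \gamma \bar{X}_s^T$, which yields
\begin{equation*}
\bar{X}_s\!\left(\tfrac{I}{\gamma} + \bar{X}_s^T \bar{X}_s\right)^{\!-1}\! \bar{X}_s^T \;=\; \gamma\bigl(I + \gamma K(s)\bigr)^{-1} K(s) \;=\; I - \bigl(I + \gamma K(s)\bigr)^{-1},
\end{equation*}
where the last equality uses $\gamma K(s) = (I+\gamma K(s)) - I$. Substituting back,
\begin{equation*}
2c(s) \;=\; -u^T u \;+\; u^T \bigl(I+\gamma K(s)\bigr)^{-1} u \;+\; \bar{Y}^T\bar{Y}.
\end{equation*}

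Finally, expanding $u^T u = \bar{Y}^T\bar{Y} - 2\bar{Y}^T d + d^T d$ makes the $\bar{Y}^T\bar{Y}$ term cancel, and dividing by two produces exactly
\begin{equation*}
c(s) \;=\; \tfrac{1}{2}(\bar{Y}-d)^T\bigl(I + \gamma\textstyle\sum_{i=1}^p s_i K_i\bigr)^{-1}(\bar{Y}-d) \;-\; \tfrac{1}{2} d^T d \;+\; \bar{Y}^T d,
\end{equation*}
as claimed. The one place to be careful is the push-through step: one must verify that $I + \gamma K(s)$ is invertible (which holds since $K(s)$ is positive semidefinite and $\gamma > 0$), and that the algebraic manipulation $\gamma(I+\gamma K(s))^{-1}K(s) = I - (I+\gamma K(s))^{-1}$ is applied correctly. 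Everything else reduces to routine linear-algebra bookkeeping.
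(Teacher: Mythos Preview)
Your proposal is correct and essentially matches the paper's proof: both start from~\eqref{eq: value cs}, invoke the Woodbury/push-through identity to replace $\bar{X}_s(\tfrac{I}{\gamma}+\bar{X}_s^T\bar{X}_s)^{-1}\bar{X}_s^T$ by $I-(I+\gamma\bar{X}_s\bar{X}_s^T)^{-1}$, and then use $\bar{X}_s\bar{X}_s^T=\sum_i s_iK_i$ together with the expansion of $(\bar{Y}-d)^T(\bar{Y}-d)$ to obtain the constant terms. The only cosmetic difference is the order of operations---the paper adds and subtracts $\tfrac{1}{2}(\bar{Y}-d)^T(\bar{Y}-d)$ first and then applies Woodbury, whereas you apply push-through first and expand $u^Tu$ afterward.
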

\begin{proof}
We have,
\begin{equation}
    \notag
    \begin{split}
        c(s) & = {-}\frac{1}{2}(\bar{Y}^T {-} d^T)\bar{X}_s(\frac{I}{\gamma} {+} \bar{X}_s^T\bar{X}_s)^{-1}\bar{X}^T_s(\bar{Y} {-} d) {+} \frac{1}{2}\bar{Y}^T\bar{Y} \\
        & = \ \frac{1}{2}(\bar{Y}^T - d^T)(I - \bar{X}_s(\frac{I}{\gamma} + \bar{X}_s^T\bar{X}_s)^{-1}\bar{X}^T_s)(\bar{Y} - d) + \\
        & \ \ \ \ \ \frac{1}{2} \bar{Y}^T\bar{Y} - \frac{1}{2}(\bar{Y}-d)^T(\bar{Y}-d)\\
        & = \ \frac{1}{2}(\bar{Y}^T {-} d^T)(I {+} \gamma \bar{X}_s \bar{X}_s^T)^{-1}(\bar{Y} {-} d) {-} \frac{1}{2} d^Td {+} \bar{Y}^Td \\
        & = \ \frac{1}{2}(\bar{Y}^T - d^T)(I + \gamma \sum_{i=1}^p s_iK_i)^{-1}(\bar{Y} - d) \\
        & \ \ \ \ \ - \frac{1}{2} d^Td + \bar{Y}^Td.
    \end{split}
\end{equation}
In the second equality, we add and subtract $\frac{1}{2} (\bar{Y}-d)^T(\bar{Y} - d)$.
For the third equality, we apply the Woodbury matrix identity formula on $I - \bar{X}_s(\frac{I}{\gamma} + \bar{X}_s^T\bar{X}_s)^{-1}\bar{X}_s^T$.
Finally, since $s \in \{0,1\}^p$, we have $\bar{X}_s^T \bar{X}_s = \sum_{i = 1}^p s_iK_i$. \\
\end{proof}

By Proposition \ref{prop: apply woodbury}, the optimization problem \eqref{eq: inner problem} can now be reformulated as
\begin{equation}
\label{eq: binary opt}
    \min_{s \in S^p_k} \frac{1}{2}(Y^T {-} d^T)(I {+} \gamma \sum_{i=1}^p s_iK_i)^{-1}(Y {-} d) {-} \frac{1}{2} d^Td {+} Y^Td.
\end{equation}

The next proposition allows us to take the derivative of $c(s)$ for $s \in \text{conv}(S^p_k)$.

\begin{prop}
\label{prop: derivative of cs}
The function
$$c(s) = \frac{1}{2}(Y^T - d^T)(I + \gamma \sum_{i=1}^p s_iK_i)^{-1}(Y - d) - \frac{1}{2} d^Td + Y^Td$$
is convex and continuous on conv$(S^p_k)$. Furthermore, its gradient is given by
$$\nabla_{s} c(s) = -\frac{1}{2} \alpha(s)^T K_i \alpha(s),$$
where $\alpha(s) = (I + \gamma \sum_{i=1}^p s_iK_i)^{-1}(Y - d),$ for $i \in \llbracket p \rrbracket$.
\end{prop}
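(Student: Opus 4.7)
The plan is to verify first that $A(s) \coloneqq I + \gamma \sum_{i=1}^p s_i K_i$ is symmetric positive definite for every $s \in \text{conv}(S^p_k)$. Since each $s_i \in [0,1]$ on $\text{conv}(S^p_k)$ and each $K_i = \bar{X}_i \bar{X}_i^T$ is positive semidefinite, the eigenvalues of $A(s)$ are bounded below by $1$, so $A(s)^{-1}$ exists and depends smoothly on $s$ throughout an open neighborhood of $\text{conv}(S^p_k)$. Consequently $c$ is well-defined and continuous on that set.

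For convexity I would exploit the variational identity
\begin{equation*}
v^T A^{-1} v = \sup_{z \in \mathds{R}^p} \bigl\{ 2 z^T v - z^T A z \bigr\},
\end{equation*}
valid for any symmetric positive definite $A$, with the supremum attained at $z = A^{-1} v$. Applied with $v = Y - d$ and $A = A(s)$, this writes $(Y-d)^T A(s)^{-1}(Y-d)$ as a pointwise supremum over $z$ of functions that are affine in $A$, and therefore affine in $s$, since $s \mapsto A(s)$ is itself affine. A pointwise supremum of affine functions is convex, and adding the $s$-independent terms $-\tfrac{1}{2} d^T d + Y^T d$ preserves convexity of $c$ on $\text{conv}(S^p_k)$.

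For the gradient I would use the standard matrix-calculus identity $d(A^{-1}) = -A^{-1}(dA) A^{-1}$. Since $\partial A(s)/\partial s_i = \gamma K_i$, this yields $\partial A(s)^{-1}/\partial s_i = -\gamma A(s)^{-1} K_i A(s)^{-1}$, and therefore
\begin{equation*}
\frac{\partial c(s)}{\partial s_i} = \tfrac{1}{2} (Y - d)^T \bigl(-\gamma A(s)^{-1} K_i A(s)^{-1}\bigr)(Y - d) = -\tfrac{\gamma}{2}\, \alpha(s)^T K_i \alpha(s),
\end{equation*}
with $\alpha(s) = A(s)^{-1}(Y-d)$ as defined in the statement. (The factor of $\gamma$ is not present in the statement's formula and is presumably a typographical oversight; the derivation is otherwise identical.)

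The only genuinely nontrivial ingredient is the convexity claim; once the supremum representation is in place, continuity and the gradient reduce to routine matrix calculus. As a backup route to convexity I would keep a direct Hessian computation: differentiating the gradient formula once more gives $\partial^2 c / \partial s_i \partial s_j = \gamma^2 \alpha(s)^T K_j A(s)^{-1} K_i \alpha(s)$, so the Hessian quadratic form evaluated on $h \in \mathds{R}^p$ becomes $\gamma^2 \alpha(s)^T \bigl(\sum_j h_j K_j\bigr) A(s)^{-1} \bigl(\sum_i h_i K_i\bigr) \alpha(s) \geq 0$ by positive definiteness of $A(s)^{-1}$, establishing convexity directly. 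I expect the variational argument to be the cleanest, with the Hessian calculation serving as an independent sanity check.
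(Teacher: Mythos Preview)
The paper does not actually supply a proof of this proposition; it is stated without argument, presumably as a routine extension of the analogous result in \cite{bertsimas2020sparse} for the case $d=0$. Your proposal is therefore more complete than what the paper offers.

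Your argument is correct throughout. The positive-definiteness of $A(s)=I+\gamma\sum_i s_iK_i$ on $\text{conv}(S^p_k)$ is immediate, and the variational representation $v^TA^{-1}v=\sup_z\{2z^Tv-z^TAz\}$ is a clean way to obtain convexity, since it exhibits $c(s)$ (up to an additive constant) as a pointwise supremum of affine functions of $s$. The Hessian backup is also correct and gives an independent confirmation. The gradient computation via $d(A^{-1})=-A^{-1}(dA)A^{-1}$ is standard and yields exactly what you wrote.

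You are also right about the missing $\gamma$: the statement as printed omits the factor that necessarily appears from $\partial A/\partial s_i=\gamma K_i$, so the $i$-th partial derivative should be $-\tfrac{\gamma}{2}\,\alpha(s)^TK_i\alpha(s)$. This is a typographical slip in the paper, not a flaw in your derivation.
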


Proposition \ref{prop: apply woodbury} and Proposition \ref{prop: derivative of cs} derive a simple representation of $c(s)$ and its derivative.
These results help us to attain an outer approximation algorithm for solving \eqref{eq: new inner prob}, see Algorithm~\eqref{algo: local}.

\begin{algorithm}[tb!]
\caption{Outer Approximation for Solving Local Problem}
\label{algo: local}
\begin{algorithmic}[1]
\Procedure{Outer Approximation}{$(\bar{X}, \bar{Y})$, $\gamma$, $d$}
    \State $s_1 \leftarrow \text{ warm start }$
    \State $\eta_1 \leftarrow 0$
    \State $t \leftarrow 1$
    \While{$\eta_t < c(s_t)$}
        \State Compute $c(s_t)$ using Proposition \ref{prop: apply woodbury}
        \State Compute $\nabla c(s_t)$ using Proposition \ref{prop: derivative of cs}
        \State $s_{t+1}, \eta_{t+1} \leftarrow \argmin_{s, \eta}$ $$\{ \eta | s \in S^p_k, \ \eta \geq c(s_i) + \nabla c(s_i) (s - s_i) \ \forall i \in \llbracket t \rrbracket \}$$
        \State $t \leftarrow t + 1$
    \EndWhile
\State $\hat{s} \leftarrow s_t$
\State $\hat{d} \leftarrow (\bar{Y} -d)^T\bar{X}_s$
\State $\hat{w} \leftarrow \left( \frac{I_p}{\gamma} + \bar{X}_s^T\bar{X}_s) \right)^{-1} \hat{d} $
\EndProcedure
\end{algorithmic}
\end{algorithm}

According to \cite[Theorem 2]{fletcher1994solving}, Algorithm \ref{algo: local} will stop after a finite number of iterations and return the optimal solution of \eqref{eq: new inner prob}.
This implies that Problem~\eqref{eq: sparse linear regression} is dual-friendly~\cite{uribe2021dual, dunner2016primal, hiriart2004fundamentals, raginsky2012continuous}.
Certainly, to the best of our knowledge, a closed form or a polynomial algorithm does not exist for solving a quadratic integer programming problem.
However, the ability to yield the exact optimal solution of \eqref{eq: new inner prob} can help us derive a convergence analysis of Algorithm \eqref{algo: deslr} in terms of the number of iterations~$T$.

\section{Numerical Experiments}
\label{section: experiments}
To evaluate the convergence behavior of Algorithm \ref{algo: deslr}, we perform a series of experiments on different datasets and different network structures.
Before presenting our numerical experiments, we first describe how we generate our synthetic dataset.
The input data $X$ and its corresponding observations~$Y$ are generated following a linear relationship, i.e.,
$$Y = Xw^* + W,$$
where $w^*$ is considered to be the true regressor with $\|w\|_0 = k$.
To generate a true regressor $w^*$, we first pick $k$ indices from $\llbracket p \rrbracket$ as non-zero entries.
Afterwards, the chosen $k$ non-zero entries of $w^*$ are drawn from an uniform distribution on $[-1, 1]$.
The white noise $W$ is a random vector whose components are independently drawn from a normal distribution $\mathcal{N}(0, \sigma^2)$ for $\sigma>0$.
The input data $X = (x_1, \dots, x_n)$ is sampled from an independent identically distributed (i.i.d.) Gaussian distribution $\mathcal{N}(0, \Sigma)$, where $\Sigma_{i, j} = \rho^{|i - j|}$.
Finally, in all of the following computational experiments, we set $\delta = \rho = 0.1$.

It is well-known that the square summable but not summable step size, albeit guarantees convergence, does not provide the best numerical performance.
Hence, in all of the following experiments, initially, every agent starts with the same step-size $\alpha$ and a learning rate $\kappa$.
At each iterations $t$, after receiving regressor weights from their neighbors, each agent $i$ computes the average difference between its own local regressor and its neighbors', i.e.,
\begin{equation}
    \epsilon^t_i \coloneqq \frac{1}{\|N(i)\|} \sum_{j \in N(i)} \|w^i_t - w^j_t\|^2_2.
\end{equation}
If at a certain iteration $t$, every agent has their current local error larger than the previous step's $\epsilon_i^t \geq \epsilon_i^{t-1} \ \forall i \in \llbracket N \rrbracket$, we update the step size for all agent by damping it down using the pre-defined learning rate, $\alpha_{new} = \kappa \alpha$. 

In Figure \ref{fig: num features}, we show the simulation results of Algorithm~\ref{algo: deslr} on a small-world network of size $N = 50$ while we vary the number of features $p$ and the size of true support $k$.
The small-world property of our network is generated by using the Watts-Strogatz Algorithm \cite{watts1998collective} using a mean degree $K = 12$ and $\beta = 0.25$. 
For each different choice of $p$, we generate $5$ different datasets and plot the mean as a bold line and the $95\%$ confidence range of the error of \eqref{eq: error} across $T = 100$ iterations as a colored shadow.

\begin{figure}[tb!]
\centering
\includegraphics[width=0.45\textwidth]{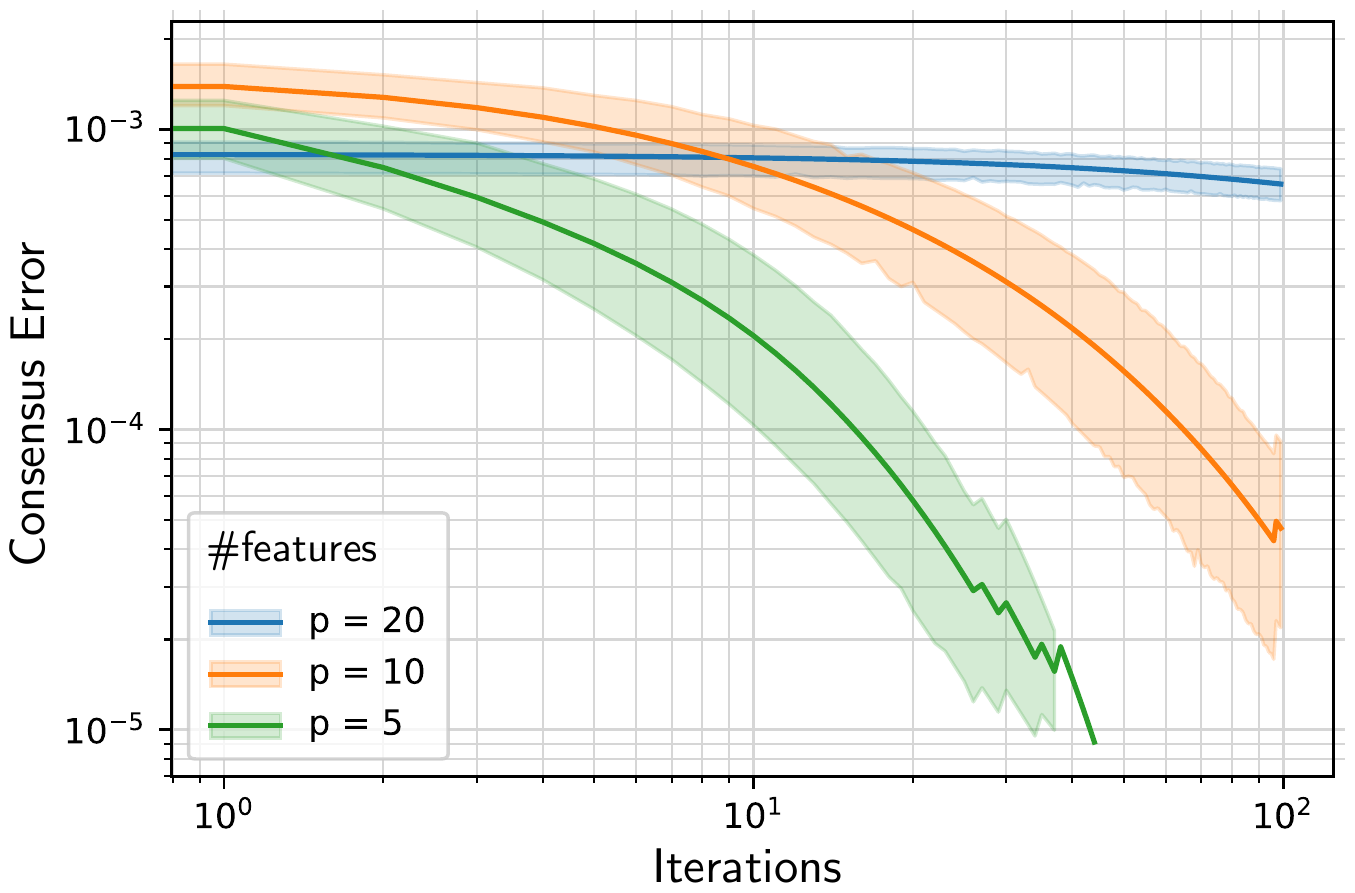}
\caption{\textbf{Convergence with respect to number of features $p$.} 
 The value for $k$ is chosen to be approximately $15\%$ the value of $p$. In particular, in the case where $p = 20$, $k = 3$, when $p = 10$, $k = 2$ and for $p = 5$, $k = 1$. The number of data-observations stored within each agent is set to scale linearly with $p$ and $k$, i.e., every agents has the same number of data points $n_i = 10pk$ for every $i \in \llbracket N \rrbracket$.}
\label{fig: num features}
\end{figure}

In Figure \ref{fig: network topology}, we analyze Algorithm \ref{algo: deslr} convergence behavior on different network classes.
We pick four common network structures: clique, star, cycle, and small-world.
Similarly, as in Figure~\ref{fig: num features}, for each network structure, we run Algorithm~\ref{algo: deslr} five times on $5$ different randomly generated dataset for $100$ iterations or until the error \eqref{eq: error} is below $10^{-5}$, and record the mean and the $95\%$ confidence interval of the error \eqref{eq: error}.

\begin{figure}[tb!]
\centering
\includegraphics[width=0.45\textwidth]{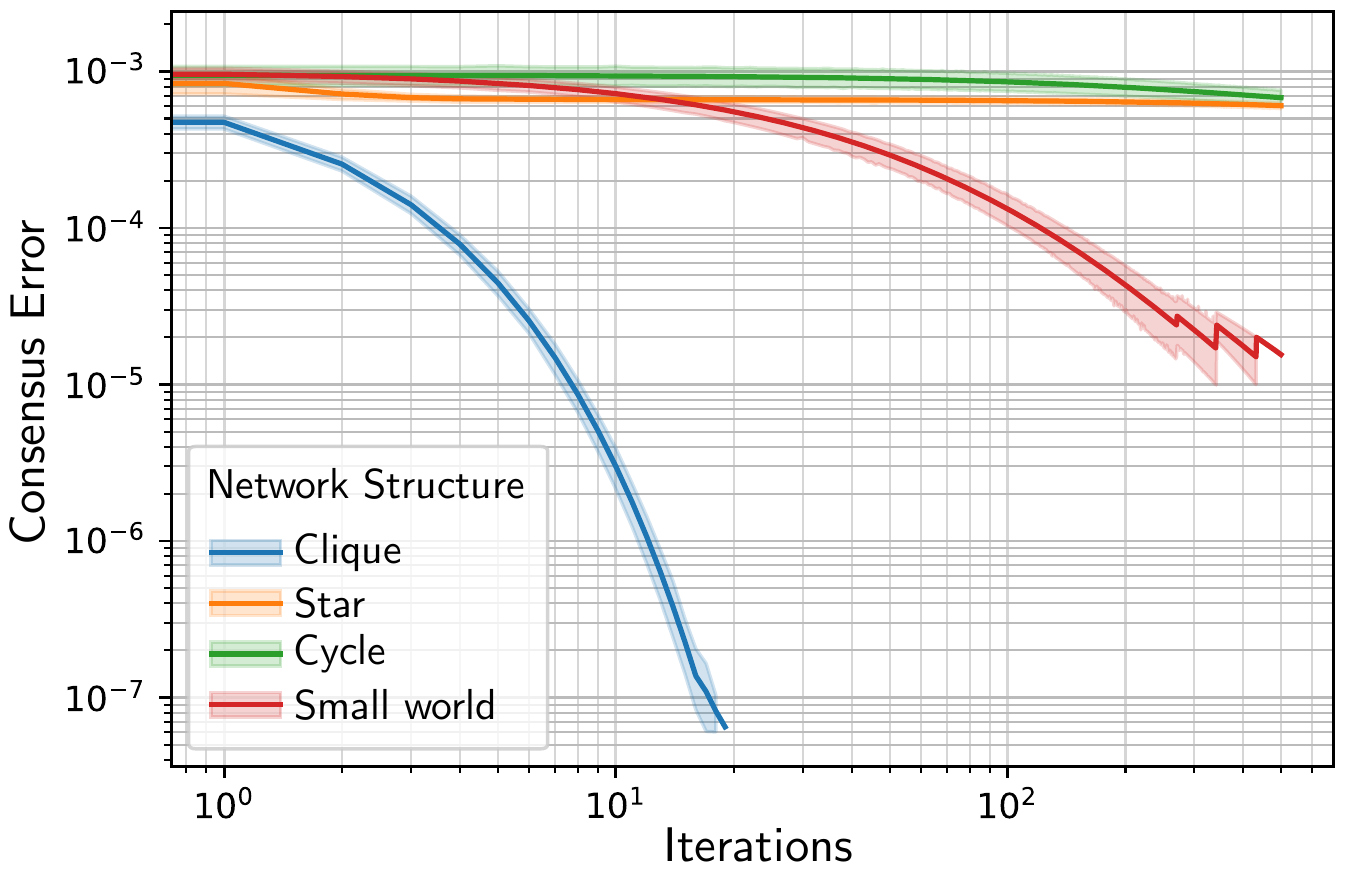}
\caption{\textbf{Influence of network structure on convergence rate.} The number of feature $p$ is set to be $18$, $k$ is set to be $3$, and the number of agents $N$ is chosen to be $50$ for every network structure.}
\label{fig: network topology}

\end{figure}
In Figure \ref{fig: network size}, we observe the error \eqref{eq: error} after $T = 100$ iterations on a path graph with various sizes. We first generate a dataset with $2000$ data points. Then, we distribute the generated dataset evenly among all agents for different network sizes. We then run Algorithm \ref{algo: deslr} on each of these settings. Figure~\ref{fig: network size} shows that as the number of agents increases, the number of iterations needed to reach the same error increases.

\begin{figure}[tb!]
\centering
\includegraphics[width=0.45\textwidth]{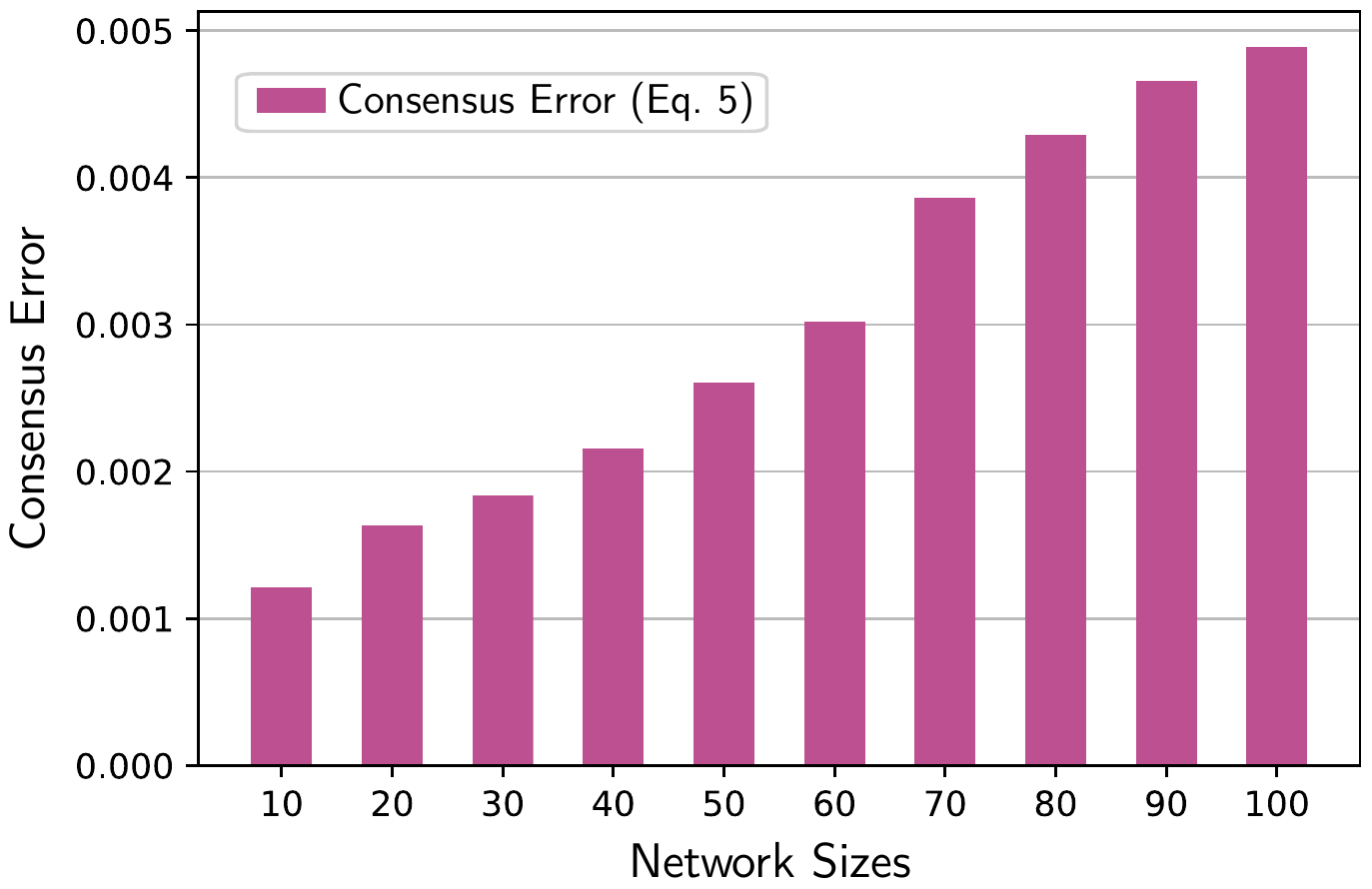}
\caption{\textbf{Influence of network's size on convergence rate.} The dataset is generated using a regressor with $p = 18$ features where there are $3$ non-zero coefficients, and the underlying graphs for the networks are paths.}
\label{fig: network size}
\end{figure}

\begin{rem}
Note that the optimal solution $\hat{w}$ of \eqref{eq: qip} is not the same the true regressor $w^*$.
Their support is the same, however, due to the $L_2$-regularization term, the value $\|\hat{w} - w^*\| > 0$.
Moreover, we have shown in Theorem \ref{thm: main} that, at each agent $i$, we have a Cauchy sequence $\{w^i_t\}_{t = 1}^\infty$ that converges to the optimal solution $\hat{w}$.
Therefore, the sub-optimal gap $\|w^i_t - \hat{w}\|$ behaves similarly to the consensus error shown in Figures~\ref{fig: num features}, \ref{fig: network topology}, and \ref{fig: network size}.
\end{rem}

For our current computational implementation, we are limited to solving the sparse linear regression problem with small $p$. This is because, at each iteration, we need to solve $N$ integer programming problems. This requires extensive computing resources.
Our software implementation for solving the inner QIP problems is not faster than a commercial solver like Gurobi \cite{gurobi}. Fast and efficient computational implementations of the proposed method are out of the scope of the present paper and are left for future work.

\section{Conclusion and Future Research}
\label{section: conclusions}
We have presented a decentralized scheme for solving exact sparse linear regression problems in the current work. We prove the convergence of the proposed method to the desired sparse solution.
Our main contribution sets a foundational approach for the study of distributed optimization methods for larger problem classes with explicit sparsity constraints.
The benefits of distributed sparse regression can be summarized as: interpretable regressors, solutions for distributed storage, and sparse communication between machines.

Future work should investigate the theoretical convergence rate of Algorithm~\ref{algo: deslr} knowing that the problem has a dual-friendly structure. Numerical experiments on larger problem scales should be studied.
As possible extensions, we consider the case where the graph is directed or time-varying as described in \cite{nedic2015nonasymptotic}.
Another direction is to study the convergence behavior of Algorithm~\ref{algo: deslr} when the sparse regressor at each agent is not optimal.
As we can see in Algorithm~\ref{algo: local}, the gap $\eta_t - c(s_t)$ is non-decreasing, and the moment this quantity attains a non-negative value, we are at an optimal solution.
Thus, $\eta_t - c(s_t)$ is a surrogate sub-optimality gap.
Furthermore, at any iteration of Algorithm~\ref{algo: local}, we have a sparse regressor.

\addtolength{\textheight}{-12cm}   





\bibliographystyle{IEEEtran} 
\bibliography{root}

\end{document}